\newtheorem{theorem}{Theorem}[section]
\newtheorem{definition}[theorem]{Definition}
\newcommand{\f}{\varphi}
\newcommand{\g}{\tilde{g}}
\newcommand{\bg}{\bar{g}}
\newcommand{\n}{\nabla}
\newcommand{\M}{(M,\A\f,\A\xi,\A\eta,\A{}g)}
\newcommand{\I}{\iota}
\newcommand{\R}{\mathbb R}
\newcommand{\X}{\mathfrak X}
\newcommand{\F}{\mathcal{F}}
\newcommand{\LL}{\mathcal{L}}
\newcommand{\ta}{\theta}
\newcommand{\om}{\omega}
\newcommand{\lm}{\lambda}
\newcommand{\sm}{\sigma}
\newcommand{\al}{\alpha}
\newcommand{\bt}{\beta}
\newcommand{\D}{\mathrm{d}}
\DeclareMathOperator{\Span}{span} 
\DeclareMathOperator{\im}{im} 
\newcommand{\ddu}[1]{\dfrac{\partial u}{\partial x^{#1}}}
\newcommand{\ddv}[1]{\dfrac{\partial v}{\partial x^{#1}}}
\newcommand{\ddw}[1]{\dfrac{\partial w}{\partial x^{#1}}}
\newcommand{\p}{\partial}
\newcommand{\thmref}[1]{Theorem~\ref{#1}}
\newcommand{\A}{\allowbreak{}}
\begin{document}

\title[Yamabe solitons on conformal Sasaki-like ...]
{Yamabe solitons on conformal Sasaki-like almost contact B-metric manifolds}

\author[M. Manev]{Mancho Manev
}

\address[M. Manev]{
University of Plovdiv Paisii Hilendarski,
Faculty of Mathematics and Informatics,
Department of Algebra and Geometry,
24 Tzar Asen St.,
Plovdiv 4000, Bulgaria
\&
Medical University -- Plovdiv,
Faculty of Pharmacy,
Department of Medical Physics and Biophysics,
15A Vasil Aprilov Blvd,
Plovdiv 4002, Bulgaria}
\email{mmanev@uni-plovdiv.bg}

\begin{abstract}
A Yamabe soliton is defined on arbitrary almost contact B-metric manifold, which is obtained by a contact conformal transformation of the Reeb vector field, its dual contact 1-form, the B-metric, and its associated B-metric.
The cases when the given manifold is cosymplectic or Sasaki-like are studied.
In this way, manifolds from one of the main classes of the studied manifolds are obtained. The same class contains the conformally equivalent manifolds of cosymplectic manifolds by the usual conformal transformation of the B-metric.
An explicit 5-dimensional example of a Lie group is given, which is characterized in relation to the obtained results.
\end{abstract}

\subjclass[2010]{Primary  
53C25, 
53D15,  	
53C50; 
Secondary
53C44,  	
53D35, 
70G45} 

\keywords{Yamabe soliton, almost contact B-metric manifold, almost contact complex Riemannian manifold, Sasaki-like manifold}
%



\maketitle

\section*{Introduction}

Richard Hamilton introduced the notion of the Yamabe flow in 1988 (see \cite{Ham88}) as an apparatus for constructing  metrics with constant scalar curvature. The significance of this problem from the point of view of mathematical physics is that the Yamabe flow corresponds to the case of fast diffusion of the porous medium equation \cite{ChLuNi}.

A self-similar solution of the Yamabe flow, defined on a Riemannian manifold or a pseudo-Riemannian manifold, is called a Yamabe soliton and is determined by (\cite{BarRib})
\begin{equation*}\label{YS-intro}
  \frac12 \LL_{v} g = (\tau - \sm) g,
\end{equation*}
where $\LL_{v} g$ stands for the Lie derivative of the metric $g$ along the vector field $v$, $\tau$ denotes the scalar curvature of $g$ and $\sm$ is a constant.
Many authors have studied the Yamabe soliton on different types of manifolds since the introduction of this concept (see e.g. \cite{Cao1,Chen1,Das1,Gho1,Roy1}).

In the present paper we begin the study of the mentioned Yamabe solitons on almost contact B-metric manifolds. The geometry of these manifolds is significantly influenced by the presence of two B-metrics, which are interconnected by the almost contact structure.

It is a well-known fact that the Yamabe flow preserves the conformal class of the metric. 
So this gives us a reason to study Yamabe solitons and conformal transformations together. 
Contact conformal transformations were studied in \cite{Man4}, which transform not only the metric but also the Reeb vector field and its associated contact 1-form through the pair of B-metrics.
According to this work, the class of almost contact B-metric manifolds, 
which is closed under the action of contact conformal transformations, 
is the direct sum of the four main classes among the eleven basic classes of these manifolds
known from the classification of Ganchev-Mihova-Gribachev in \cite{GaMiGr}. 
Main classes are called those whose manifolds are characterized by the fact that the covariant derivative of the structure tensors with respect to the Levi-Civita connection of some of the B-metrics is expressed only by the pair of B-metrics and the corresponding traces.

We study Yamabe solitons on two of the simplest types of manifolds among those studied, namely cosymplectic and Sasaki-like.
The former have parallel structure tensors with respect to the Levi-Civita connections of the B-metrics.
The latter are those whose complex cone is a K\"ahler manifold with a pair of Norden metrics, i.e. again with parallel structure tensors with respect to the Levi-Civita connections of the metrics.
Note that the class of Sasaki-like manifolds does not contain cosymplectic manifolds, although they are in each of the eleven basic classes of the classification used.

We find that the manifolds thus constructed in both cases belong to one of the main classes, the only one which contains the conformally equivalent manifolds of the cosymplectic ones by the usual conformal transformations.

The present paper is organized as follows. 
Section 1 is devoted to the basic concepts of almost contact B-metric manifolds, contact conformal transformations of the structure tensors on them and the introduction of the notion of a Yamabe soliton 
on a transformed almost contact B-metric manifold.
In Section 2 and Section 3, we study the constructed manifolds when the initial manifold is cosymplectic and Sasaki-like, respectively.
In Section 4, we provide an explicit example of a Lie group as
a 5-dimensional manifold equipped with the structures studied.

\section{Almost contact B-metric manifolds, contact conformal transformations and Yamabe solitons}

Let $(M,\f,\xi,\eta,g)$ be an almost contact manifold with
B-metric or an \emph{almost contact B-metric manifold}, i.e. $M$ is
a $(2n+1)$-dimensional differen\-tia\-ble manifold with an almost
contact structure $(\f,\xi,\eta)$ consisting of an endomorphism
$\f$ of the tangent bundle, a vector field $\xi$, its dual 1-form
$\eta$ as well as $M$ is equipped with a pseudo-Riemannian metric
$g$  of signature $(n+1,n)$, such that the following algebraic
relations are satisfied \cite{GaMiGr}
\begin{gather}
\f\xi = 0,\quad \f^2 = -\I + \eta \otimes \xi,\quad
\eta\circ\f=0,\quad \eta(\xi)=1, \label{str}\\%
g(X, Y ) = - g(\f X, \f Y ) + \eta(X)\eta(Y) \label{g}
\end{gather}
for arbitrary $X$, $Y$ of the algebra $\X(M)$ on the smooth vector
fields on $M$, where $\I$ stands for the identity transformation on $\X(M)$.

The manifolds $(M,\f,\xi,\eta,g)$ are also known as \emph{almost contact complex Rie\-man\-ni\-an
manifolds} (see e.g. \cite{IvMaMa45}).

Further, $X$, $Y$, $Z$ will stand for arbitrary elements of
$\X(M)$ or vectors in the tangent space $T_pM$ of $M$ at an arbitrary
point $p$ in $M$.

The associated B-metric $\tilde{g}$ of $g$ on $M$ is defined by
\begin{equation*}\label{tg}
\tilde{g}(X,Y)=g(X,\f Y)\allowbreak+\eta(X)\eta(Y)
\end{equation*}
and it is also of signature $(n+1,n)$ as its counterpart $g$.

The fundamental tensor $F$ of type (0,3) on $\M$ is defined by
\begin{equation*}\label{F=nfi}
F(X,Y,Z)=g\bigl( \left( \nabla_X \f \right)Y,Z\bigr),
\end{equation*}
where $\n$ is the Levi-Civita connection of $g$.
The following properties of $F$ are consequences of \eqref{str} and \eqref{g}:
\begin{equation*}\label{F-prop}
\begin{split}
F(X,Y,Z)&=F(X,Z,Y)\\[4pt]
&=F(X,\f Y,\f Z)+\eta(Y)F(X,\xi,Z) +\eta(Z)F(X,Y,\xi)
\end{split}
\end{equation*}
and relations of $F$ with $\n\xi$ and $\n\eta$ are:
\begin{equation}\label{FXieta}
    \left(\n_X\eta\right)Y=g\left(\n_X\xi,Y\right)=F(X,\f Y,\xi).
\end{equation}

The following 1-forms, known as Lee forms of the manifold, are associated with $F$:
\begin{equation*}\label{tataom}
    \ta=g^{ij}F(E_i,E_j,\cdot),\qquad \ta^*=g^{ij}F(E_i,\f E_j,\cdot),\qquad \om=F(\xi,\xi,\cdot),
\end{equation*}
where $g^{ij}$ are the components of the inverse matrix of $g$
with respect to a basis $\left\{E_i;\xi\right\}$
$(i=1,2,\dots,2n)$ of $T_pM$.
The following general identities for the Lee forms of $\M$ are known from \cite{ManGri1}
\begin{equation}\label{tataom=id}
    \ta^*\circ \f=-\ta\circ\f^2,\qquad \om(\xi)=0.
\end{equation}

A classification of the almost contact B-metric manifolds
in terms of $F$ is given in \cite{GaMiGr}. This classification
includes eleven basic classes $\F_1$, $\F_2$, $\dots$, $\F_{11}$.
Their intersection is the special class $\F_0$ defined by
condition for the vanishing of $F$. Hence $\F_0$ is the class of cosymplectic B-metric manifolds,
where the structures $\f$, $\xi$, $\eta$, $g$, $\tilde{g}$ are $\n$-parallel.

Using the pair of B-metrics $g$ and $\g$ as well as $\eta\otimes\eta$, in \cite{ManGri1}, the author and K. Gribachev introduced the so-called contact conformal transformation of the B-metric $g$ into a new B-metric $\bar g$ for $(M,\f,\xi,\eta,g)$. Later, in \cite{Man4}, this transformation is generalized as a contact conformal transformation that gives an almost contact B-metric structure $(\f,\bar\xi,\bar\eta,\bar g)$ as follows
\begin{equation}\label{cct}
\begin{array}{l}
\bar\xi=e^{-w}\xi,\qquad \bar\eta=e^{w}\eta,\\[4pt]
\bar g= e^{2u}\cos{2v}\, g + e^{2u}\sin{2v}\, \g + \left(e^{2w}-e^{2u}\cos{2v}-e^{2u}\sin{2v}\right)\eta\otimes\eta,
\end{array}
\end{equation}
where $u, v, w$ are differentiable functions on $M$.

The corresponding tensors $\bar F$ and $F$ are related by \cite{Man4}, see also \cite[(22)]{ManIv38},
\begin{equation}\label{ff}
\begin{aligned}
    2\bar{F}&(X,Y,Z)= 2e^{2u}\cos{2v}\, F(X,Y,Z)
    \\[4pt]
    &
    +e^{2u}\sin{2v} \left[F(\f Y,Z,X)-F(Y,\f Z,X)+F(X,\f Y,\xi)\eta(Z)\right]\\[4pt]
    &+e^{2u}\sin{2v}
    \left[F(\f Z,Y,X)-F(Z,\f Y,X)+F(X,\f
    Z,\xi)\eta(Y)\right]\\[4pt]
    &+(e^{2w}-e^{2u}\cos{2v})\left[F(X,Y,\xi)+F(\f Y,\f
    X,\xi)\right]\eta(Z)\\[4pt]
    &+(e^{2w}-e^{2u}\cos{2v})
    \left[F(X,Z,\xi)+F(\f Z,\f X,\xi)\right]\eta(Y)\\[4pt]
    &+(e^{2w}-e^{2u}\cos{2v})
    \left[F(Y,Z,\xi)+F(\f Z,\f Y,\xi)\right]\eta(X)\\[4pt]
    &+(e^{2w}-e^{2u}\cos{2v})\left[F(Z,Y,\xi)+F(\f Y,\f Z,\xi)\right]\eta(X)
\\[4pt]
    &-2e^{2u}\left[
    \cos{2v}\,\al(Z)
    +\sin{2v}\,\bt(Z)\right]g(\f X,\f Y)
\\[4pt]
    &-2e^{2u}\left[
    \cos{2v}\,\al(Y)
    +\sin{2v}\,\bt(Y)\right]g(\f X,\f Z)
\\[4pt]
    &-2e^{2u}\left[
    \cos{2v}\,\bt(Z)
    -\sin{2v}\,\al(Z)\right]g(X,\f Y)
\\[4pt]
    &-2e^{2u}\left[
    \cos{2v}\,\bt(Y)
    -\sin{2v}\,\al(Y)\right]g(X,\f Z)
\\[4pt]
    &+2e^{2w}\eta(X)\left[\eta(Y)\D w(\f Z)+\eta(Z)\D w(\f
    Y)\right],
\end{aligned}
\end{equation}
where we use the following notations for  brevity
\begin{equation}\label{albt}
\al = \D u\circ \f + \D v, \qquad \bt = \D u - \D v\circ \f.
\end{equation}

In the general case, the relations between the Lee forms of the manifolds $\M$ and $(M,\f,\bar\xi,\bar\eta,\bar g)$ are the following (see \cite{Man4})
\begin{equation}\label{ttbartt}
    \bar{\ta} = \ta+2n\, \al,\qquad
    \bar{\ta}^* = \ta^* +2n\,\bt,\qquad
    \bar{\om}  = \om +\D w\circ \f.
\end{equation}

\begin{definition}
We say that the B-metric $\bg$ generates a Yamabe soliton with potential the Reeb vector field $\bar\xi$ and soliton constant $\bar\sigma$ on a conformal almost contact B-metric manifold $(M,\f,\bar\xi,\bar\eta,\bg)$, if the following condition is satisfied
\begin{equation}\label{YS}
  \frac12 \LL_{\bar\xi} \bg = (\bar\tau - \bar\sm)\bar g,
\end{equation}
where $\bar\tau$ is the scalar curvature of $\bar g$.
\end{definition}

It is well known the following expression of the Lie derivative in terms of the covariant derivative with respect to the Levi-Civita connection $\bar\n$ of 
$\bg$
\begin{equation}\label{L1}
  \left(\LL_{\bar\xi} \bg\right)(X,Y) = \bg\left(\bar\n_X \bar\xi, Y\right)+\bg\left(X, \bar\n_Y \bar\xi\right).
\end{equation}
%

\section{The case when the given manifold is cosymplectic}

In this section, we consider $\M$ to be a cosymplectic manifold with almost contact B-metric structure, i.e. an $\F_0$-manifold, defined by covariant constant structure tensor $\f$ (and consequently the same condition for $\xi$, $\eta$, $\g$ is valid) with respect to
$\n$ of $g$.
Therefore $F=0$ and \eqref{ff} takes the form
\begin{equation}\label{ff0}
\begin{aligned}
    \bar{F}(X,Y,Z)=& -e^{2u}\left[
    \cos{2v}\,\al(Z)
    +\sin{2v}\,\bt(Z)\right]g(\f X,\f Y)
\\[4pt]
    &-e^{2u}\left[
    \cos{2v}\,\al(Y)
    +\sin{2v}\,\bt(Y)\right]g(\f X,\f Z)
\\[4pt]
    &-e^{2u}\left[
    \cos{2v}\,\bt(Z)
    -\sin{2v}\,\al(Z)\right]g(X,\f Y)
\\[4pt]
    &-e^{2u}\left[
    \cos{2v}\,\bt(Y)
    -\sin{2v}\,\al(Y)\right]g(X,\f Z)
\\[4pt]
    &+e^{2w}\eta(X)\left[\eta(Y)\D w(\f Z)+\eta(Z)\D w(\f
    Y)\right].
\end{aligned}
\end{equation}

From the latter equality, bearing in mind the general identity \eqref{FXieta} for the manifolds under consideration,
we obtain
\begin{equation*}\label{xi0}
\begin{aligned}
    \bar g\left(\bar\n_X \bar\xi, Y\right)=& -e^{2u-w}\left[\cos{2v}\,\bt(\xi)-\sin{2v}\,\al(\xi)\right]g(\f X,\f Y)
\\[4pt]
    & +e^{2u-w}\left[\cos{2v}\,\al(\xi)+\sin{2v}\,\bt(\xi)\right]g( X,\f Y)
\\[4pt]
    &+e^{w}\eta(X)\D w(\f^2 Y).
\end{aligned}
\end{equation*}

Combining the last expression with \eqref{albt} and \eqref{L1} gives the following
\begin{equation}\label{Lxi0}
\begin{aligned}
    \left(\LL_{\bar\xi} \bg\right)(X,Y) =& -2e^{2u-w}\left[\cos{2v}\,\D{u}(\xi)-\sin{2v}\,\D{v}(\xi)\right]g(\f X,\f Y)
\\[4pt]
    & +2e^{2u-w}\left[\cos{2v}\,\D{v}(\xi)+\sin{2v}\,\D{u}(\xi)\right]g( X,\f Y)
\\[4pt]
    &+e^{w}\left[\eta(X)\D w(\f^2 Y)+\eta(Y)\D w(\f^2 X)\right].
\end{aligned}
\end{equation}

\begin{theorem}\label{thm:F0-YS}
An almost contact B-metric manifold that is cosymplectic can be transformed by a contact conformal transformation of type \eqref{cct} so that the transformed B-metric is a Yamabe soliton with potential the transformed Reeb vector field
and a soliton constant $\bar\sm$
if and only if the functions $(u,v,w)$ of the used trans\-formation satisfy the conditions
\begin{equation}\label{F0-YS-uvw}
    \D u(\xi)=0,\qquad \D v(\xi)=0,\qquad \D w=\D w(\xi)\eta.
\end{equation}
Moreover, the obtained Yamabe soliton has a constant scalar curvature with value $\bar\tau=\bar\sm$ and the obtained almost contact B-metric manifold belongs to the subclass of the main class $\F_1$ determined by the conditions:
\begin{equation}\label{F0F1-YS}
    \bar{\ta} = 2n \left\{\D u\circ \f - \D v\circ \f^2\right\},
    \qquad
    \bar{\ta}^* = -2n \left\{\D u\circ \f^2 + \D v\circ \f\right\}.
\end{equation}
\end{theorem}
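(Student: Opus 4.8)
The plan is to confront the explicit Lie derivative \eqref{Lxi0} with the transformed metric \eqref{cct} inside the soliton equation \eqref{YS}. First I would rewrite $\bg$ in the same bilinear ``basis'' as \eqref{Lxi0}: using \eqref{g} and the definition of $\g$ one gets $\bg(X,Y)=-e^{2u}\cos 2v\,g(\f X,\f Y)+e^{2u}\sin 2v\,g(X,\f Y)+e^{2w}\eta(X)\eta(Y)$. Writing the soliton condition as $\tfrac12\LL_{\bar\xi}\bg=c\,\bg$ with $c:=\bar\tau-\bar\sm$, both sides are then carried by the three independent forms $g(\f\cdot,\f\cdot)$, $g(\cdot,\f\cdot)$, $\eta\otimes\eta$ together with the ``mixed'' contribution of $\D w\circ\f^2$, and I would match them componentwise through the splitting $T_pM=\ker\eta\oplus\langle\xi\rangle$.

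The matching proceeds in three evaluations. Taking $X=Y=\xi$ annihilates every term of \eqref{Lxi0} (because $\f\xi=0$ and $\f^2\xi=0$) while $\bg(\xi,\xi)=e^{2w}\neq0$, so the equation forces $c=0$, i.e. $\bar\tau=\bar\sm$; this already yields the ``moreover'' claim and collapses \eqref{YS} to $\LL_{\bar\xi}\bg=0$. Next, taking $X=\xi$ and $Y\in\ker\eta$ isolates the last summand of \eqref{Lxi0}; since $\f^2Y=-Y$ there, it gives $\D w(Y)=0$ on $\ker\eta$, that is $\D w=\D w(\xi)\eta$. Finally, for $X,Y\in\ker\eta$ the coefficients of $g(\f X,\f Y)$ and $g(X,\f Y)$ produce the homogeneous system $\cos 2v\,\D u(\xi)-\sin 2v\,\D v(\xi)=0$, $\sin 2v\,\D u(\xi)+\cos 2v\,\D v(\xi)=0$, whose coefficient matrix is a rotation (determinant $1$), forcing $\D u(\xi)=\D v(\xi)=0$. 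This establishes the forward implication; read backwards, substituting \eqref{F0-YS-uvw} into \eqref{Lxi0} makes all three summands vanish, so $\LL_{\bar\xi}\bg=0$.

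For the class statement I would feed \eqref{F0-YS-uvw} into the cosymplectic tensor \eqref{ff0} and check the reduced $\bar F$ against the defining identity of $\F_1$. The Lee forms come directly from \eqref{ttbartt}: since $F=0$ gives $\ta=\ta^*=\om=0$, we have $\bar\ta=2n\al$ and $\bar\ta^*=2n\bt$ with $\al,\bt$ from \eqref{albt}. Using $\f^2=-\I+\eta\otimes\xi$ together with $\D u(\xi)=\D v(\xi)=0$ one rewrites $\D u=-\D u\circ\f^2$ and $\D v=-\D v\circ\f^2$, which turns $2n\al=2n(\D u\circ\f+\D v)$ and $2n\bt=2n(\D u-\D v\circ\f)$ into exactly \eqref{F0F1-YS}.

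The step I expect to be the real obstacle is the converse half of the scalar-curvature claim. The evaluation at $(\xi,\xi)$ delivers $\bar\tau=\bar\sm$ only after one already assumes that a constant $\bar\sm$ exists, so closing the ``if'' direction requires showing that $\bar\tau$ is a genuine (global) constant once \eqref{F0-YS-uvw} holds. Here $\LL_{\bar\xi}\bg=0$ makes $\bar\xi$ Killing, but this alone yields only $\bar\xi(\bar\tau)=0$, not global constancy; one must therefore compute the scalar curvature of $\bg$ from that of the cosymplectic $g$ through the transformation \eqref{cct} and verify that it reduces to a constant. Matching the reduced $\bar F$ with the precise $\F_1$ normal form is the other bookkeeping-heavy point, though conceptually routine once the Lee forms are in hand.
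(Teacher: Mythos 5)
Your proposal follows essentially the same route as the paper's proof: the paper likewise substitutes $\xi$ into the soliton identity (its \eqref{L-YS}) to force $\bar\tau=\bar\sm$ and $\D w\circ\f=0$, then reads off $\D u(\xi)=\D v(\xi)=0$ from the coefficients of $g(\f X,\f Y)$ and $g(X,\f Y)$ (your rotation--matrix argument, stated there without the determinant remark), and finally rewrites $\bar F$ via \eqref{gbarg} and the Lee--form relations \eqref{ttbartt} to land in $\F_1$; your derivation of \eqref{F0F1-YS} from $\bar\ta=2n\al$, $\bar\ta^*=2n\bt$ and $\f^2=-\I+\eta\otimes\xi$ is exactly the paper's closing step.

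The one substantive point of difference is the ``obstacle'' you flag at the end, and you are right that it is genuine --- but you should know that the paper does not resolve it either. The paper's proof consists entirely of the ``only if'' direction: assuming a soliton with constant $\bar\sm$ exists, it derives \eqref{case1} and \eqref{F0-YS-uvw}. For the converse, conditions \eqref{F0-YS-uvw} annihilate every term of \eqref{Lxi0}, so \eqref{YS} collapses to $(\bar\tau-\bar\sm)\bg=0$, and a soliton with a genuine \emph{constant} $\bar\sm$ exists precisely when $\bar\tau$ is globally constant; as you observe, the Killing property of $\bar\xi$ yields only $\bar\xi(\bar\tau)=0$, and nowhere in the paper is $\bar\tau$ computed from $g$, $u$, $v$, $w$ to establish its constancy (the scalar-curvature formula quoted in the example section is only valid for contact homothetic transformations, i.e.\ constant $u$, $v$, $w$). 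So your write-up is faithful to what the paper actually proves, and the additional scalar-curvature argument you call for is indeed what would be needed to close the equivalence as literally stated.
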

\begin{proof}
Is we assume that $\bg$ generates a Yamabe soliton with potential $\bar\xi$ and a soliton constant $\bar\sigma$ on $(M,\f,\bar\xi,\bar\eta,\bg)$, then due to \eqref{YS} and \eqref{Lxi0} we have the following
\begin{equation}\label{L-YS}
\begin{split}
&\ 
    -e^{2u-w}
  \left\{\left[\cos{2v}\,\D{u}(\xi)-\sin{2v}\,\D v(\xi)\right]g(\f X,\f Y)\right.\\[4pt]
  &\phantom{\hspace{48pt}} \left.
  -\left[\sin{2v}\,\D{u}(\xi)+\cos{2v}\,\D v(\xi)\right]g(X,\f Y)\right\}\\[4pt]
  &
  +\frac12 e^{w}\left\{\D w(\f^2 X)\eta(Y) +\D w(\f^2 Y)\eta(X)\right\}\\[4pt]
  &=(\bar\tau - \bar\sm)
  \left\{-e^{2u}\left[\cos{2v}\, g(\f X,\f Y)-\sin{2v}\, g(X,\f Y)\right]\right.\\[4pt]
  &\phantom{=(\bar\tau - \bar\sm)\left\{\right.}\left.
  +e^{2w}\eta(X)\eta(Y)\right\}.
\end{split}
\end{equation}

The substitution $\xi$ for $Y$ in the latter equality gives the following
\begin{equation}\label{ew}
\begin{split}
\D w(\f^2 X)=2e^{w}(\bar\tau - \bar\sm)\eta(X),
\end{split}
\end{equation}
which is valid if and only if
\begin{equation}\label{case1}
\bar\tau = \bar\sm.
\end{equation}
Therefore, \eqref{ew} implies the condition $\D w\circ \f =0$.
In this case, because of \eqref{L-YS}, we obtain $\D u(\xi)=\D v(\xi)=0$, which completes \eqref{F0-YS-uvw}.

Let us remark that $\LL_{\bar\xi} \bg$ vanishes because of \eqref{case1} and \eqref{YS}, i.e. $\bar\xi$ is a Killing vector field in the considered case.

After that, we apply the following relations to \eqref{ff0}, which are equivalent to the last formula in \eqref{cct}:
\begin{equation}\label{gbarg}
\begin{array}{l}
\bar g(\f X,\f Y)=e^{2u}\cos{2v}\,g(\f X,\f Y)-e^{2u}\sin{2v}\,g( X,\f Y),\\[4pt]
\bar g( X,\f Y)=e^{2u}\cos{2v}\,g( X,\f Y)+e^{2u}\sin{2v}\,g(\f X,\f Y).
\end{array}
\end{equation}
Then we use \eqref{F0-YS-uvw} and the formula $\bar\eta=e^{w}\eta$ from \eqref{cct}  to obtain the following expression
\begin{equation}\label{ff0bar}
\begin{aligned}
    \bar{F}(X,Y,Z)=&\ \bg(\f X,\f Y)\,\al(\f^2 Z) + \bg(X,\f Y)\,\bt(\f^2 Z)
\\[4pt]
    &+\bg(\f X,\f Z)\,\al(\f^2 Y) + \bg(X,\f Z)\,\bt(\f^2 Y).
\end{aligned}
\end{equation}
Now, we substitute the Lee forms of $\bar F$ from \eqref{ttbartt} in \eqref{ff0bar} and get
\begin{equation*}\label{ff0bar2}
\begin{aligned}
    \bar{F}(X,Y,Z)=&\ \frac{1}{2n}\Bigl\{\bg(\f X,\f Y)\,\bar\ta(\f^2 Z) + \bg(X,\f Y)\,\bar\ta^*(\f^2 Z)
\\
    &\hspace{20pt}
    +\bg(\f X,\f Z)\,\bar\ta(\f^2 Y) + \bg(X,\f Z)\,\bar\ta^*(\f^2 Y)\Bigr\}.
\end{aligned}
\end{equation*}
The last expression of $\bar F$ means that the obtained manifold belongs to the main class $\F_1$, according to the classification in \cite{GaMiGr}, and the corresponding Lee forms have the following properties:
\[
\bar\ta=-\bar\ta\circ \f^2,\qquad
\bar\ta^*=-\bar\ta^*\circ \f^2,\qquad
\bar\om=0.
\]

Finally, taking into account \eqref{ttbartt}, \eqref{F0-YS-uvw} and the vanishing of $\ta$, $\ta^*$ and $\om$ for any cosymplectic B-metric manifold, we find the expressions of the Lee forms of the transformed manifold as in \eqref{F0F1-YS}.
\end{proof}

As a result from \eqref{F0-YS-uvw}, we obtain that the case in the present section is possible when the functions $(u,v,w)$ of the contact conformal transformation in \eqref{cct} satisfy the conditions:
\begin{itemize}
  \item $u$ and $v$ are constants on the vertical distribution $\mathcal{V}=\Span\xi=\ker\f$;
  \item $w$ is a constant on the horizontal (contact) distribution $\mathcal{H}=\ker\eta=\im\f$.
\end{itemize}

\section{The case when the given manifold is Sasaki-like}

In the present section, we suppose that the given almost contact B-metric manifold $\M$ is Sasaki-like, i.e. the
complex cone $M\times \mathbb R^-$  is a K\"ahler manifold with Norden metric also known as a holomorphic complex Riemannian manifold \cite{IvMaMa45}.

If $\M$ is Sasaki-like, then the following condition is met:
\begin{equation*}\label{Sl}
  \n_X \xi = -\f X.
\end{equation*}
The Sasaki-like condition in terms of $F$ given in \cite{IvMaMa45} is the following
\begin{equation}\label{F=Sl}
F(X,Y,Z)=g(\f X,\f Y)\eta(Z)+g(\f X,\f Z)\eta(Y).
\end{equation}
Then, by virtue of \eqref{FXieta}, \eqref{ff}, \eqref{L1} for $(M,\f,\bar\xi,\bar\eta,\bg)$ and \eqref{F=Sl}, we get
%
\begin{equation}\label{L2uv=}
\begin{split}
  \left(\LL_{\bar\xi} \bg\right)(X,Y) =&\
  2e^{2u-w}
  \left\{\langle\sin{2v}\,\D{u}(\xi)-\cos{2v}\,\left[1-\D v(\xi)\right]\rangle g(X,\f Y)\right.
  \\[4pt]
  &
  \left.-\langle\cos{2v}\, \D{u}(\xi)+\sin{2v}\left[1-\D v(\xi)\right]\rangle g(\f X,\f Y)
  \right\}\\[4pt]
  &
  +e^{w}\left\{\D w(\f^2 X)\eta(Y) +\D w(\f^2 Y)\eta(X)\right\}.
\end{split}
\end{equation}

\begin{theorem}\label{thm:Sl-YS}
An almost contact B-metric manifold that is Sasaki-like can be transformed by a contact conformal transformation of type \eqref{cct} so that the transformed B-metric is a Yamabe soliton with potential the transformed Reeb vector field and a soliton constant $\bar\sm$ if and only if the functions $(u,v,w)$ of the used transformation satisfy the conditions
\begin{equation}\label{L2-YS-uvw}
    \D u(\xi)=0,\qquad \D v(\xi)=1,\qquad \D w=\D w(\xi)\eta.
\end{equation}
Moreover, the obtained Yamabe soliton has a constant scalar curvature with a value $\bar\tau=\bar\sm$ and the transformed almost contact B-metric manifold belongs to a subclass of the main class $\F_1$ determined by:
\begin{equation}\label{ttbartt0=}
    \bar{\ta} = 2n \left\{\D u\circ \f - \D v\circ \f^2\right\},
    \qquad
    \bar{\ta}^* = 2n \left\{\D u - \D v\circ \f\right\}.
\end{equation}
\end{theorem}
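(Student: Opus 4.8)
The plan is to run the same argument as in \thmref{thm:F0-YS}, exploiting that \eqref{L2uv=} is obtained from \eqref{Lxi0} by the formal replacement of $\D v(\xi)$ with $\D v(\xi)-1$; accordingly, the cosymplectic condition $\D v(\xi)=0$ should migrate into $\D v(\xi)=1$. First I would assume that $\bg$ generates a Yamabe soliton. Substituting \eqref{L2uv=} into the defining relation \eqref{YS} and rewriting the right-hand side through \eqref{gbarg} together with $\bar\eta=e^{w}\eta$ from \eqref{cct}, I obtain one tensorial identity, the exact analogue of \eqref{L-YS}, that must hold for all $X,Y$.

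The next step extracts the conditions on $(u,v,w)$. Setting $Y=\xi$ and using $\f\xi=0$ annihilates the terms carrying $g(X,\f Y)$ and $g(\f X,\f Y)$, while $\bg(X,\xi)=e^{2w}\eta(X)$ (from \eqref{cct}) reduces the identity to exactly \eqref{ew}, i.e. $\D w(\f^2 X)=2e^{w}(\bar\tau-\bar\sm)\eta(X)$. Putting $X=\xi$ forces $\bar\tau=\bar\sm$ as in \eqref{case1}, which already secures the constant scalar curvature claim, after which \eqref{ew} collapses to $\D w\circ\f^2=0$, that is $\D w=\D w(\xi)\eta$. Since $\bar\tau=\bar\sm$ makes the right-hand side of \eqref{YS} vanish, we get $\LL_{\bar\xi}\bg=0$, so $\bar\xi$ is Killing; feeding $\D w=\D w(\xi)\eta$ back into \eqref{L2uv=} leaves only the two contact terms, whose coefficients must vanish separately because $g(\cdot,\f\cdot)$ and $g(\f\cdot,\f\cdot)$ are linearly independent bilinear forms on the contact distribution. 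This yields the linear system
\[
\begin{pmatrix}\sin 2v & -\cos 2v\\[2pt] \cos 2v & \sin 2v\end{pmatrix}
\begin{pmatrix}\D u(\xi)\\[2pt] 1-\D v(\xi)\end{pmatrix}=0,
\]
whose coefficient determinant equals $\sin^{2}2v+\cos^{2}2v=1$, hence $\D u(\xi)=0$ and $\D v(\xi)=1$, completing \eqref{L2-YS-uvw}. For the converse I would read \eqref{L2uv=} backwards: under \eqref{L2-YS-uvw} every coefficient on its right-hand side vanishes, so $\LL_{\bar\xi}\bg=0$ and \eqref{YS} holds with the constant $\bar\sm=\bar\tau$.

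It remains to identify the class. Here I would first record the Lee forms of a Sasaki-like manifold by tracing \eqref{F=Sl}: using \eqref{g} and $\f\xi=0$, the first trace gives $\ta=-2n\,\eta$, the $\f$-trace vanishes so $\ta^{*}=0$, and $\om=F(\xi,\xi,\cdot)=0$. Substituting these into \eqref{ttbartt} with $\al,\bt$ from \eqref{albt} and the conditions \eqref{L2-YS-uvw}, and rewriting $\D v\circ\f^{2}=-\D v+\eta$ via $\f^{2}=-\I+\eta\otimes\xi$ together with $\D v(\xi)=1$, the spurious $\eta$-terms cancel and I recover precisely \eqref{ttbartt0=}. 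To certify the $\F_{1}$-membership itself, I would insert the Sasaki-like $F$ from \eqref{F=Sl} into the transformation law \eqref{ff}, impose \eqref{L2-YS-uvw} (so that $\D w\circ\f=0$), and re-express everything through $\bg$ by \eqref{gbarg}; the expected outcome is the $\F_{1}$-form
\[
\bar F(X,Y,Z)=\bg(\f X,\f Y)\,\al(\f^{2} Z)+\bg(X,\f Y)\,\bt(\f^{2} Z)+\bg(\f X,\f Z)\,\al(\f^{2} Y)+\bg(X,\f Z)\,\bt(\f^{2} Y),
\]
the same shape as \eqref{ff0bar} (the extra $\eta$-term in $\bar\ta$ is killed under $\f^{2}$, so this coincides with the $\frac1{2n}$-expression in the Lee forms), which characterizes the main class $\F_{1}$.

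The main obstacle is this last reduction. Unlike the cosymplectic case, here $F\neq0$, so the several $F$-dependent brackets in \eqref{ff}, namely the $\sin 2v$ terms and the $(e^{2w}-e^{2u}\cos 2v)$ terms assembled from $F(\cdot,\cdot,\xi)$, are all nonzero and must be evaluated on \eqref{F=Sl} and then shown to recombine, after \eqref{L2-YS-uvw} is imposed, into exactly the anticipated $\F_{1}$-expression. Checking that these genuinely Sasaki-like contributions reorganize into the clean form above, rather than spilling into components outside $\F_{1}$, is where the bulk of the bookkeeping lies.
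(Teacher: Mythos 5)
Your proposal follows essentially the same route as the paper's own proof: substitute \eqref{L2uv=} into \eqref{YS}, set $Y=\xi$ to force $\bar\tau=\bar\sm$ and $\D w=\D w(\xi)\eta$, then annihilate the remaining coefficients (your rotation-matrix determinant argument just makes explicit what the paper asserts) to get $\D u(\xi)=0$, $\D v(\xi)=1$, and finally insert \eqref{F=Sl} into \eqref{ff}, use \eqref{gbarg} and \eqref{ttbartt}, and identify the $\F_1$-form and the Lee forms \eqref{ttbartt0=}. The bookkeeping you defer at the end comes out exactly as you predict: for Sasaki-like $F$ the brackets $F(\cdot,\cdot,\xi)+F(\f\cdot,\f\cdot,\xi)$ in \eqref{ff} vanish identically, the $\sin 2v$ brackets reduce to $-2g(\cdot,\f\cdot)\eta(\cdot)$ terms absorbed into $\al-\eta=-\al\circ\f^2$, and the resulting expression is precisely your $\al(\f^2\cdot)$, $\bt(\f^2\cdot)$ form, which is equivalent to the paper's \eqref{FbarSlbar}; so the plan is correct and essentially identical to the paper's proof.
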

\begin{proof}
The expression in \eqref{L2uv=} and the assumption that $\bg$ generates a Yamabe soliton with potential $\bar\xi$ and a soliton constant $\bar\sigma$ on $(M,\f,\bar\xi,\bar\eta,\bg)$ imply the following
\begin{equation}\label{L2-YS}
\begin{split}
&\
    e^{2u-w}
  \left\{\langle\sin{2v}\,\D{u}(\xi)-\cos{2v}\,\left[1-\D v(\xi)\right]\rangle g(X,\f Y)\right.
  \\[4pt]
  &\hspace{35pt}
  \left.-\langle\cos{2v}\, \D{u}(\xi)+\sin{2v}\left[1-\D v(\xi)\right]\rangle g(\f X,\f Y)
  \right\}\\[4pt]
  &
  +\frac12 e^{w}\left\{\D w(\f^2 X)\eta(Y) +\D w(\f^2 Y)\eta(X)\right\}\\[4pt]
  &=(\bar\tau - \bar\sm)
  \left\{e^{2u}\left[\sin{2v}\, g(X,\f Y)-\cos{2v}\, g(\f X,\f Y)\right]\right.\\[4pt]
  &\phantom{=(\bar\tau - \bar\sm)\left\{\right.}\left.
  +e^{2w}\eta(X)\eta(Y)\right\}.
\end{split}
\end{equation}
An obvious consequence for $Y=\xi$ is the following
\begin{equation*}\label{L2-YS-xi}
    \D w(\f^2 X)=2 e^{w}(\bar\tau - \bar\sm)\eta(X),
\end{equation*}
which is satisfied if and only it the following conditions are fulfilled
\begin{gather}
    \bar\tau = \bar\sm,\label{L2-YS-tau}
\\[4pt]
    \D w=\D w(\xi)\eta.\label{L2-YS-w}
\end{gather}

Due to \eqref{L2-YS-tau} and \eqref{YS}, we have the vanishing of $\LL_{\bar\xi} \bg$, which means that $\bar\xi$ is a Killing vector field in this case as well.

Applying \eqref{L2-YS-tau} and \eqref{L2-YS-w} in \eqref{L2-YS}, we obtain
\begin{equation*}\label{L2-YS=0}
\begin{split}
&\
    e^{2u-w}
  \left\{\langle\sin{2v}\,\D{u}(\xi)-\cos{2v}\,\left[1-\D v(\xi)\right]\rangle g(X,\f Y)\right.
  \\[4pt]
  &\hspace{35pt}
  \left.-\langle\cos{2v}\, \D{u}(\xi)+\sin{2v}\left[1-\D v(\xi)\right]\rangle g(\f X,\f Y)
  \right\}=0.
\end{split}
\end{equation*}

The latter equality is valid for arbitrary vector fields if and only if the following conditions are satisfied
\begin{equation*}\label{L2-YS-uv}
    \D u(\xi)=0,\qquad \D v(\xi)=1.
\end{equation*}


Substitute  \eqref{F=Sl} and \eqref{L2-YS-uvw} into \eqref{ff} to get
\begin{equation}\label{FbarSl}
\begin{aligned}
    \bar{F}(X,Y,Z)=&\ \frac{e^{2u}}{2n}\Bigl\{
    \left[\cos{2v}\{\al(Z)-\eta(Z)\}+\sin{2v}\,\bt(Z)\right]
    g(\f X,\f Y)
    \\[4pt]
    &\hspace{24pt}
    +\left[\cos{2v}\,\bt(Z)-\sin{2v}\{\al(Z)-\eta(Z)\}\right]
    g(X,\f Y)
    \\[4pt]
    &\hspace{24pt}
    +\left[\cos{2v}\{\al(Y)-\eta(Y)\}+\sin{2v}\,\bt(Y)\right]
    g(\f X,\f Z)
    \\[4pt]
    &\hspace{24pt}
    +\left[\cos{2v}\,\bt(Y)-\sin{2v}\{\al(Y)-\eta(Y)\}\right]
    g(X,\f Z)\Bigr\}.
\end{aligned}
\end{equation}

Using our assumption that $\M$ is Sasaki-like, we have \cite{IvMaMa45}
\[
\ta=-2n\,\eta,\qquad \ta^*=\om=0.
\]
Then, by  notations \eqref{albt} and conditions \eqref{L2-YS-uvw},  formulae \eqref{ttbartt} yield
\begin{equation}\label{ttbartt0}
    \bar{\ta} = 2n \left[\al - \eta \right],
    \qquad
    \bar{\ta}^* = 2n\, \bt,
    \qquad
    \bar{\om}  = 0.
\end{equation}

Equalities \eqref{albt} and \eqref{L2-YS-uvw} imply the relation
\(
\bt=-\al\circ \f,
\)
which together with the first identity in \eqref{tataom=id} helps to obtain the following expressions
\begin{equation}\label{albtta}
    \al = - \frac{\bar{\ta}\circ\f^2}{2n}+\eta,
    \qquad
    \bt = - \frac{\bar{\ta}\circ\f}{2n}.
\end{equation}
Substitute \eqref{albtta} into  \eqref{FbarSl} and get the formula
\begin{equation*}\label{FbarSlbar}
\begin{aligned}
    \bar{F}(X,Y,Z)=&\ \frac{e^{2u}}{2n}\Bigl\{
    \left[\cos{2v}\,\bar\ta(\f^2Z)+\sin{2v}\,\bar\ta(\f Z)\right]
    g(\f X,\f Y)
    \\[4pt]
    &\hspace{24pt}
    +\left[\cos{2v}\,\bar\ta(\f Z)-\sin{2v}\,\bar\ta(\f^2Z)\right]
    g(X,\f Y)
    \\[4pt]
    &\hspace{24pt}
    +\left[\cos{2v}\,\bar\ta(\f^2Y)+\sin{2v}\,\bar\ta(\f Y)\right]
    g(\f X,\f Z)
    \\[4pt]
    &\hspace{24pt}
    +\left[\cos{2v}\,\bar\ta(\f Y)-\sin{2v}\,\bar\ta(\f^2Y)\right]
    g(X,\f Z)\Bigr\}.
\end{aligned}
\end{equation*}
Then, we apply \eqref{gbarg} in the last equality
and obtain the following expression
\begin{equation*}\label{FbarSl=F1}
\begin{aligned}
    \bar{F}(X,Y,Z)=&\ \frac{1}{2n}\Bigl\{
    \bar\ta(\f^2Z)\bar g(\f X,\f Y)+\bar\ta(\f Z)\bar g(X,\f Y)
    \\[4pt]
    &\hspace{24pt}
    +\bar\ta(\f^2Y)\bar g(\f X,\f Z)+\bar\ta(\f Y)\bar g(X,\f Z)\Bigr\}.
\end{aligned}
\end{equation*}
The obtained form of $\bar F$ coincides with the definition $(M,\f,\bar\xi,\bar\eta,\bar g)$ to belong to the basic class $\F_1$, according to the classification of Ganchev-Mihova-Gribachev in \cite{GaMiGr}.

Finally, the expression of the Lee forms of the transformed manifold given in \eqref{ttbartt0=}  follows from \eqref{ttbartt0} and \eqref{albt}.
\end{proof}

\section{Example}

We recall a known example of a Sasaki-like manifold, given in \cite{IvMaMa45} as Example~2.
A Lie group $G$ of
dimension $5$ is considered to have
 a basis of left-invariant vector fields $\{E_0,\dots, E_{4}\}$
 defined by the following commutators for $\lm,\mu\in\R$:
\[
\begin{array}{ll}
[E_0,E_1] = \lm E_2 + E_3 + \mu E_4,\quad &[E_0,E_2] = - \lm E_1 -
\mu E_3 + E_4,\\[4pt]
[E_0,E_3] = - E_1  - \mu E_2 + \lm E_4,\quad &[E_0,E_4] = \mu E_1
- E_2 - \lm E_3.
\end{array}
\]
An invariant almost contact B-metric structure is then defined
 on $G$  by
\begin{equation*}\label{strEx1}
\begin{array}{rl}
&g(E_0,E_0)=g(E_1,E_1)=g(E_2,E_2)=1\\[4pt]
&g(E_3,E_3)=g(E_4,E_4)=-1,
\\[4pt]
&g(E_i,E_j)=0,\quad
i,j\in\{0,1,2,3,4\},\; i\neq j,
\\[4pt]
&\xi=E_0, \quad \f  E_1=E_3,\quad   \f E_2=E_4.
\end{array}
\end{equation*}
It is verified that the constructed manifold $(G,\f,\xi,\eta,g)$
is an almost contact B-metric manifold that is Sasaki-like.

In \cite{Man62}, the components $R_{ijkl}=R(e_i,e_j,e_k,e_l)$ of its curvature tensor are calculated
and the type of the corresponding Ricci tensor $\rho$ is found. Namely, it is $\rho=4\, \eta\otimes\eta$ and the scalar curvature is $\tau=4$.

Using the non-zero ones of $R_{ijkl}$ determined by the following equalities:
\begin{equation*}\label{Rex1}
\begin{array}{l}
R_{0110}=R_{0220}=-R_{0330}=-R_{0440}=1,\\[4pt]
R_{1234}=R_{1432}=R_{2341}=R_{3412}=1,\qquad    
R_{1331}=R_{2442}=1
\end{array}
\end{equation*}
and the properties $R_{ijkl}=-R_{jikl}=-R_{ijlk}$, we compute that the associated quantity $\tau^*$ of $\tau$ is zero, i.e.
\begin{equation*}\label{tau*ex1}
\tau^*=g^{ij}\rho(E_i,\f E_j)=0.
\end{equation*}

Now, we define the following functions on $\R^5=\left\{(x^0,x^1,x^2,x^3,x^4)\right\}$:
\begin{equation}\label{ExSl=uvw}
\begin{split}
u&=\frac12\ln \left\{\left[(x^1)^2+(x^3)^2\right]\left[(x^2)^2+(x^4)^2\right]\right\},\\[4pt]
v&=\arctan \frac{x^1x^4+x^2x^3}{x^3x^4-x^1x^2}+x^0,
\\[4pt]
w&=x^0.
\end{split}
\end{equation}

The non-zero ones between their partial derivatives are the following:
\begin{equation*}\label{ExSl=d}
\begin{array}{ll}
\ddu{1}=-\ddv{3}=\dfrac{x^1}{(x^1)^2+(x^3)^2},\quad &
\ddu{2}=-\ddv{4}=\dfrac{x^2}{(x^2)^2+(x^4)^2},
\\[9pt]
\ddu{3}=\ddv{1}=\dfrac{x^3}{(x^1)^2+(x^3)^2},\quad &
\ddu{4}=\ddv{2}=\dfrac{x^4}{(x^2)^2+(x^4)^2},
\\[9pt]
\ddv{0}=\ddw{0}=1.
\end{array}
\end{equation*}

Then, for an arbitrary vector field $X=X^i\frac{\p}{\p x^i}$, $i\in\{0,1,2,3,4\}$, we have
\[
\f X=-X^3\frac{\p}{\p x^1}-X^4\frac{\p}{\p x^2}+X^1\frac{\p}{\p x^3}+X^2\frac{\p}{\p x^4},\quad
X^0=\eta(X),\quad \xi=\frac{\p}{\p x^0}.
\]

We verify immediately that the functions defined by \eqref{ExSl=uvw} satisfy the properties
\[
\D u =-\D v\circ \f,\qquad \D u(\xi)=0,\qquad \D v(\xi)=1,\qquad \D w=\eta.
\]

Let us consider a contact conformal transformation defined by \eqref{cct}, where the functions $(u,v,w)$ are determined as in  \eqref{ExSl=uvw}.

Then, the transformed manifold $(G,\f,\bar\xi,\bar\eta,\bar g)$ is an $\F_1$-manifold with a Yamabe soliton with potential $\bar\xi$ and a constant scalar curvature $\bar\tau=\bar\sm$, according ot \thmref{thm:Sl-YS}.
Moreover, taking into account \eqref{ttbartt0=}, we obtain the corresponding Lee forms as follows
\begin{equation*}\label{ttbartt0=ex}
    \bar{\ta} = 4n\, \D u\circ \f,
    \qquad
    \bar{\ta}^* = 4n\, \D u,
    \qquad
    \bar{\om}=0.
\end{equation*}
According to Proposition 8 in \cite{IvMaMa45},
the Ricci tensor of an almost contact B-metric manifold is invariant under a contact homothetic transformation (i.e. when $u$, $v$, $w$ are constants), and therefore for the corresponding scalar curvatures we have
\begin{equation*}\label{t*bar-ex1}
\begin{array}{l}
    \bar\tau = e^{-2u} \cos{2v}\, \tau - e^{-2u} \sin{2v}\, \tau^* + \left\{ e^{-2w} - e^{-2u} \cos{2v}\right\} \rho (\xi,\xi),\\[4pt]
    \bar\tau^* = e^{-2u} \sin{2v}\, \tau + e^{-2u} \cos{2v}\, \tau^* - e^{-2u} \sin{2v}\, \rho (\xi,\xi).
\end{array}
\end{equation*}
Then, bearing in mind the last results, we obtain for our example that
\begin{equation*}\label{t*bar-ex1=}
    \bar\tau = 4\,e^{-2w},\qquad
    \bar\tau^* = 0.
\end{equation*}
Hence, $(G,\f,\bar\xi,\bar\eta,\bar g)$ is scalar flat and the Yamabe soliton constant for $\bg$ is $\bar\sm=4$, and thus the obtained Yamabe soliton is shrinking.

\section*{Acknowledgements}
The author was supported by projects MU21-FMI-008 and FP21-FMI-002 of the Scientific Research Fund,
University of Plovdiv Paisii Hilendarski, Bulgaria.

%
\section*{Conflict of interest}

The author declares that he has no conflict of interest.


\end{document}